\theoremstyle{plain}
\newtheorem{lemma}{Lemma}
\newtheorem{notation}{Notation}
\newtheorem{theorem}{Theorem}
\numberwithin{equation}{section}
\begin{document}
\title[Dodgson's Algorithm]{A Mathematical Proof of Dodgson's Algorithm}
\author{Kouachi Said}
\address{ University Centre of Khenchela 40100 Algeria.\\
}
\email{kouachi.said@caramail.com}
\urladdr{}
\thanks{}
\author{Abdelmalek Salem}
\curraddr{Department of mathematics, University Centre of Tebessa 12002
Algeria.}
\email{a.salem@gawab.com}
\urladdr{}
\thanks{}
\author{Rebiai Belgacem}
\address{Department of mathematics, University Centre of Tebessa 12002
Algeria.}
\email{brebia\"{\i}@gmail.com}
\urladdr{}
\date{2006}
\subjclass[2000]{Primary 15A15.}
\keywords{Dodgson's Algorithm, Matrix, Determinant}
\dedicatory{}
\thanks{}

\begin{abstract}
In this paper we give a mathematical proof of Dodgson algorithm \cite%
{Dodgson}. Recently Zeilberger \cite{Zeilberger} gave a bijective proof. Our
techniques are based on determinant properties and they are obtained by
induction.
\end{abstract}

\maketitle

\section{Introduction}

To prove mathematically the well-known Dodgson's algorithm, concerning a
square matrix $A=\left( a_{i,j}\right) _{1\leq i,j\leq n}$ we may generalize
it as follows:

\begin{equation}
\left. 
\begin{array}{c}
\det \left[ \left( a_{i,j}\right) _{1\leq i,j\leq n}\right] \det \left[
\left( a_{i,j}\right) _{\substack{ i\neq k,l  \\ j\neq k,l}}\right] = \\ 
\det \left[ \left( a_{i,j}\right) _{\substack{ i\neq l  \\ j\neq l}}\right]
\det \left[ \left( a_{i,j}\right) _{\substack{ i\neq k  \\ j\neq k}}\right]
-\det \left[ \left( a_{i,j}\right) _{\substack{ i\neq l  \\ j\neq k}}\right]
\det \left[ \left( a_{i,j}\right) _{\substack{ i\neq k  \\ j\neq l}}\right] ,%
\end{array}%
\right.  \label{(1.1)}
\end{equation}%
for all $k,l=1,...,n$ considering $k<l.$

For this purpose, we need some notations

\subsection{Notations}

\begin{notation}
The $\left( n-k\right) \times \left( n-l\right) $ matrix obtained from $A$
by removing the $i_{1}^{th},i_{2}^{th}...i_{k}^{th}$ rows and the $%
j_{1}^{th},$ $j_{2}^{th};...j_{l}^{th}$ columns is denoted by $\left(
a_{i,j}\right) _{\substack{ i\neq i_{1},i_{2},...i_{k}  \\ j\neq
j_{1},j_{2},...j_{l}}}$.
\end{notation}

\begin{notation}
we denote by $A(k)$ to the $\left( n-2\right) $ square matrix obtained from $%
A$ by removing the rows $\left( n-2\right) $ and $\left( n-1\right) $ and
the columns $\left( n-1\right) $ and $n$ and replacing the column $\left(
n-2\right) $ by the $\left( n-k\right) $, i.e%
\begin{equation}
A(k)=\left( 
\begin{array}{ccccc}
a_{11} & a_{12} & \cdots & a_{1\left( n-3\right) } & a_{1\left( n-k\right) }
\\ 
a_{21} & a_{22} & \cdots & a_{2\left( n-3\right) } & a_{2\left( n-k\right) }
\\ 
\vdots & \vdots & \ddots & \vdots & \vdots \\ 
a_{\left( n-3\right) 1} & a_{\left( n-3\right) 2} & \cdots & a_{\left(
n-3\right) \left( n-3\right) } & a_{\left( n-3\right) \left( n-k\right) } \\ 
a_{n1} & a_{n2} & \cdots & a_{n\left( n-3\right) } & a_{n\left( n-k\right) }%
\end{array}%
\right) ,  \label{(1.2)}
\end{equation}%
also it can be written as a block matrix :\newline
\begin{equation*}
A(k)=\left( 
\begin{array}{cc}
\left( a_{i,j}\right) _{\substack{ i\neq n,n-1,n-2  \\ j\neq n-1,n-2}} & 
\begin{array}{c}
a_{1\left( n-k\right) } \\ 
a_{2\left( n-k\right) } \\ 
\vdots \\ 
a_{\left( n-3\right) \left( n-k\right) } \\ 
a_{n\left( n-k\right) }%
\end{array}%
\end{array}%
\right) \text{.}
\end{equation*}
\end{notation}

\begin{notation}
The $n$ square matrix obtained from $A$ by replacing the row $n$ by the $%
\left( n-l\right) $ one is denoted by $B\left( l\right) $, i.e%
\begin{equation}
B\left( l\right) =\left( 
\begin{array}{ccccccc}
a_{11} & a_{12} & \cdots & a_{1\left( n-3\right) } & a_{1\left( n-2\right) }
& a_{1\left( n-1\right) } & a_{1n} \\ 
a_{21} & a_{22} & \cdots & a_{2\left( n-3\right) } & a_{2\left( n-2\right) }
& a_{2\left( n-1\right) } & a_{2n} \\ 
\vdots & \vdots & \ddots & \vdots & \vdots & \vdots & \vdots \\ 
a_{\left( n-3\right) 1} & a_{\left( n-3\right) 2} & \cdots & a_{\left(
n-3\right) \left( n-3\right) } & a_{\left( n-3\right) \left( n-2\right) } & 
a_{\left( n-3\right) \left( n-1\right) } & a_{\left( n-3\right) n} \\ 
a_{\left( n-2\right) 1} & a_{\left( n-2\right) 2} & \cdots & a_{\left(
n-2\right) \left( n-3\right) } & a_{\left( n-2\right) \left( n-2\right) } & 
a_{\left( n-2\right) \left( n-1\right) } & a_{\left( n-2\right) n} \\ 
a_{\left( n-1\right) 1} & a_{\left( n-1\right) 2} & \cdots & a_{\left(
n-1\right) \left( n-3\right) } & a_{\left( n-1\right) \left( n-2\right) } & 
a_{\left( n-1\right) \left( n-1\right) } & a_{\left( n-1\right) n} \\ 
a_{(n-l)1} & a_{(n-l)2} & \cdots & a_{(n-l)\left( n-3\right) } & 
a_{(n-l)\left( n-2\right) } & a_{(n-l)\left( n-1\right) } & a_{(n-l)n}%
\end{array}%
\right) .  \label{(1.3)}
\end{equation}
\end{notation}

\section{RESULTS}

We need some lemmas:

\begin{lemma}
We have%
\begin{equation}
A(0)=\left( a_{i,j}\right) _{\substack{ i\neq n-1,n-2  \\ j\neq n-1,n-2}}%
,A(1)=\left( a_{i,j}\right) _{\substack{ i\neq n-1,n-2  \\ j\neq n,n-2}}%
,A(2)=\left( a_{i,j}\right) _{\substack{ i\neq n-1,n-2  \\ j\neq n,n-1}},
\label{(2.1)}
\end{equation}%
\begin{equation}
\det \left[ A(k)\right] =0\text{ for all }k=3,...,n-1  \label{(2.2)}
\end{equation}%
and%
\begin{eqnarray}
\det \left[ A(k)\right] &=&a_{n\left( n-k\right) }\det \left[ \left(
a_{i,j}\right) _{\substack{ i\neq n,n-1,n-2  \\ j\neq n,n-1,n-2}}\right] + 
\notag \\
&&\overset{l=n-1}{\underset{l=3}{\tsum }}\left( -1\right) ^{l}\left(
a_{n-l,n-k}\right) \det \left[ \left( a_{i,j}\right) _{\substack{ i\neq
n-1,n-2,n-l  \\ j\neq n,n-1,n-2}}\right]  \label{(2.3)}
\end{eqnarray}
\end{lemma}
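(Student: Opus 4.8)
The plan is to establish the three assertions in turn, each being a direct consequence of the block structure of $A(k)$ recorded in the notation defining it. Throughout I read off from that notation that $A(k)$ is the $(n-2)\times(n-2)$ matrix whose rows are the rows $1,2,\dots,n-3,n$ of $A$ and whose columns are the columns $1,2,\dots,n-3$ of $A$ followed by the $(n-k)$-th column of $A$ in the last position.

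First I would prove (2.1) by pure bookkeeping. Substituting $k=0,1,2$ makes the inserted last column equal to column $n$, $n-1$, $n-2$ of $A$ respectively, so the column-index set of $A(k)$ becomes $\{1,\dots,n-3,n\}$, $\{1,\dots,n-3,n-1\}$, $\{1,\dots,n-2\}$; together with the fixed row-index set $\{1,\dots,n-3,n\}$ these match verbatim the deletion sets $(a_{i,j})_{\substack{i\neq n-1,n-2\\ j\neq n-1,n-2}}$, $(a_{i,j})_{\substack{i\neq n-1,n-2\\ j\neq n,n-2}}$ and $(a_{i,j})_{\substack{i\neq n-1,n-2\\ j\neq n,n-1}}$. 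No computation is needed. For (2.2), the point is that when $k=3,\dots,n-1$ the inserted column index $n-k$ lies in $\{1,\dots,n-3\}$, i.e.\ it coincides with one of the columns $1,\dots,n-3$ already present in $A(k)$; hence $A(k)$ has two equal columns and $\det[A(k)]=0$.

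The substantive part is (2.3), which I would obtain as the Laplace (cofactor) expansion of $\det[A(k)]$ along its last column. That column has entries $a_{1,n-k},a_{2,n-k},\dots,a_{(n-3),n-k},a_{n,n-k}$, read off the rows $1,\dots,n-3,n$. Expanding an $(n-2)$-square matrix along column position $n-2$, the contribution of the bottom row (whose index in $A$ is $n$) carries the cofactor sign $(-1)^{(n-2)+(n-2)}=+1$; deleting that row and the last column leaves the rows $1,\dots,n-3$ and columns $1,\dots,n-3$, i.e.\ the minor $\det\left[(a_{i,j})_{\substack{i\neq n,n-1,n-2\\ j\neq n,n-1,n-2}}\right]$, producing the first summand of (2.3). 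For a row sitting at position $r\in\{1,\dots,n-3\}$ the row index in $A$ is again $r$; writing $r=n-l$ so that $l$ runs over $3,\dots,n-1$ as $r$ runs over $n-3,\dots,1$, the cofactor sign becomes $(-1)^{(n-l)+(n-2)}=(-1)^{2n-2-l}=(-1)^{l}$, the entry is $a_{n-l,n-k}$, and deleting row $n-l$ and the last column leaves the rows $\{1,\dots,n-3,n\}\setminus\{n-l\}$ and columns $\{1,\dots,n-3\}$, i.e.\ the minor $\det\left[(a_{i,j})_{\substack{i\neq n-1,n-2,n-l\\ j\neq n,n-1,n-2}}\right]$. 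Collecting these terms yields (2.3) exactly.

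I expect the only delicate step to be the sign and index bookkeeping in the last paragraph: one must distinguish the position of a row inside the $(n-2)$-square $A(k)$ from its index in the original matrix $A$ (these agree for the top $n-3$ rows but not for the bottom one, whose position is $n-2$ while its index is $n$), and then confirm the reduction $(-1)^{(n-l)+(n-2)}=(-1)^{l}$. Once the correspondence $r=n-l$ between row position and summation index is fixed and this sign is verified, matching each minor to the corresponding deletion set $(a_{i,j})_{\ldots}$ is routine.
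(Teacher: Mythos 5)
Your proof is correct and takes essentially the same route as the paper's: (2.1) by direct substitution of $k=0,1,2$ into the definition of $A(k)$, (2.2) from the observation that the inserted column duplicates one of the columns $1,\dots,n-3$, and (2.3) by cofactor expansion of $\det\left[A(k)\right]$ along its last column. You merely make explicit the sign and position-versus-index bookkeeping that the paper's one-line proof leaves implicit.
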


\begin{proof}
Formula 
\msihyperref{(2.1)}{(}{)}{(2.1)}
is trivial by replacing in 
\msihyperref{(1.2)}{(}{)}{(1.2)}%
, $k$ by $0$, $1$ and $2$ respectively. Formula 
\msihyperref{(2.2)}{(}{)}{(2.2)}
is also trivial since the columns $\left( n-2\right) $ and $\left(
n-k\right) $ are equal. Formula 
\msihyperref{(2.3)}{(}{)}{(2.3)}
is a simple developping of $\det A(k)$ according to the last column.
\end{proof}

\begin{lemma}
We have%
\begin{equation}
B(0)=A,  \label{(2.4)}
\end{equation}%
\begin{equation}
\det \left[ B(l)\right] =0\text{ for all }l=1,...,n-1  \label{(2.5)}
\end{equation}%
and%
\begin{equation}
\det \left[ B(l)\right] =\overset{n-1}{\underset{k=0}{\tsum }}\left(
-1\right) ^{k}\left( a_{n-l,n-k}\right) \det \left[ \left( a_{i,j}\right) 
_{\substack{ i\neq n  \\ j\neq n-k}}\right] .  \label{(2.6)}
\end{equation}
\end{lemma}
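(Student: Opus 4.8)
The plan is to treat the three assertions separately, since \msihyperref{(2.4)}{(}{)}{(2.4)} and \msihyperref{(2.5)}{(}{)}{(2.5)} are immediate structural observations while \msihyperref{(2.6)}{(}{)}{(2.6)} is the substantive computation. For \msihyperref{(2.4)}{(}{)}{(2.4)} I would simply set $l=0$ in Notation 3: replacing row $n$ of $A$ by row $n-0=n$ leaves $A$ unchanged, so $B(0)=A$. For \msihyperref{(2.5)}{(}{)}{(2.5)} the key remark is that $B(l)$ keeps the first $n-1$ rows of $A$ intact and overwrites row $n$ with row $n-l$. When $1\le l\le n-1$ we have $1\le n-l\le n-1$, so row $n-l$ already occurs among the untouched first $n-1$ rows; thus $B(l)$ has two identical rows (row $n-l$ and the new row $n$), which forces $\det\left[ B(l)\right] =0$.

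The heart of the lemma is \msihyperref{(2.6)}{(}{)}{(2.6)}, which I would obtain as the Laplace (cofactor) expansion of $\det\left[ B(l)\right] $ along its last row. By Notation 3 the entries of that last row are $a_{(n-l),j}$ for $j=1,\dots ,n$, so the expansion reads
\[
\det \left[ B(l)\right] =\sum_{j=1}^{n}(-1)^{n+j}\,a_{(n-l),j}\,M_{n,j},
\]
where $M_{n,j}$ is the minor obtained by deleting row $n$ and column $j$ from $B(l)$. Here I would invoke the structural fact once more: deleting row $n$ from $B(l)$ removes the overwritten row and leaves precisely the first $n-1$ rows of $A$, so $M_{n,j}=\det \left[ \left( a_{i,m}\right) _{\substack{ i\neq n \\ m\neq j}}\right] $, exactly the type of submatrix appearing on the right of \msihyperref{(2.6)}{(}{)}{(2.6)}.

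The remaining step, and the only place where care is needed, is the reindexing and sign bookkeeping that converts the column index $j$ into the shifted index $k$ used in the statement. Substituting $j=n-k$ (so that $j$ running from $1$ to $n$ corresponds to $k$ running from $n-1$ down to $0$) turns the minor into $\det \left[ \left( a_{i,m}\right) _{\substack{ i\neq n \\ m\neq n-k}}\right] $ and the coefficient into $a_{(n-l),(n-k)}$, while the sign becomes $(-1)^{n+(n-k)}=(-1)^{2n-k}=(-1)^{k}$. Collecting these yields
\[
\det \left[ B(l)\right] =\sum_{k=0}^{n-1}(-1)^{k}\,a_{(n-l),(n-k)}\,\det \left[ \left( a_{i,m}\right) _{\substack{ i\neq n \\ m\neq n-k}}\right] ,
\]
which is \msihyperref{(2.6)}{(}{)}{(2.6)}. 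I expect the main (if modest) obstacle to be exactly this sign computation, since a single slip there would destroy the alternating pattern; everything else is a direct reading of the definitions together with the elementary determinant facts (equal rows give determinant zero, and cofactor expansion) already used for \msihyperref{(2.5)}{(}{)}{(2.5)}.
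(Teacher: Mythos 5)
Your proposal is correct and follows essentially the same route as the paper: \msihyperref{(2.4)}{(}{)}{(2.4)} by setting $l=0$ in the definition, \msihyperref{(2.5)}{(}{)}{(2.5)} from the presence of two equal rows, and \msihyperref{(2.6)}{(}{)}{(2.6)} by cofactor expansion of $\det\left[ B(l)\right]$ along the last row. Your version is in fact more careful than the paper's one-line argument, since you make the reindexing $j=n-k$ and the sign computation $(-1)^{n+(n-k)}=(-1)^{k}$ explicit, and you correctly identify the duplicated row as row $n-l$ (the paper's phrasing ``rows $n$ and $(n-1)$'' is accurate only for $l=1$).
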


\begin{proof}
formula 
\msihyperref{(2.4)}{(}{)}{(2.4)}
is trivial by replacing in 
\msihyperref{(1.2)}{(}{)}{(1.2)}
$l$ by $0.$ Formula 
\msihyperref{(2.5)}{(}{)}{(2.5)}
is also trivial since the rows $n$ and $\left( n-1\right) $ are equal.
Formula 
\msihyperref{(2.6)}{(}{)}{(2.6)}
is a simple developping of $\det B(l)$.according to the last row.
\end{proof}

\begin{lemma}
IF the determinant of the matrix $\left( a_{i,j}\right) _{\substack{ i\neq
n-1,n  \\ j\neq n-1,n}}$ is null, thus we get the following formula: 
\begin{equation}
\det \left[ \left( a_{i,j}\right) _{\substack{ i\neq n  \\ j\neq n}}\right]
\det \left[ \left( a_{i,j}\right) _{\substack{ i\neq n-1  \\ j\neq n-1}}%
\right] -\det \left[ \left( a_{i,j}\right) _{\substack{ i\neq n  \\ j\neq
n-1 }}\right] \det \left[ \left( a_{i,j}\right) _{\substack{ i\neq n-1  \\ %
j\neq n }}\right] =0  \label{(2.7)}
\end{equation}
\end{lemma}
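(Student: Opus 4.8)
The plan is to reduce each of the four $(n-1)\times(n-1)$ minors in \eqref{(2.7)} to a single bilinear form governed by the top-left block, and then use the hypothesis to make that form collapse. Write $M=\left( a_{i,j}\right) _{1\leq i,j\leq n-2}$ for the $(n-2)\times(n-2)$ principal block, so that the hypothesis reads $\det M=0$. Introduce the border data $u=(a_{i,n-1})_{1\leq i\leq n-2}$ and $v=(a_{i,n})_{1\leq i\leq n-2}$ (columns), $p=(a_{n-1,j})_{1\leq j\leq n-2}$ and $q=(a_{n,j})_{1\leq j\leq n-2}$ (rows), together with the four corner scalars $a_{n-1,n-1},a_{n-1,n},a_{n,n-1},a_{n,n}$. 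With this notation each minor is a bordered determinant; for instance
\begin{equation*}
\det \left[ \left( a_{i,j}\right) _{\substack{ i\neq n \\ j\neq n}}\right]
=\det\begin{pmatrix} M & u \\ p^{T} & a_{n-1,n-1}\end{pmatrix},
\end{equation*}
and the other three are obtained by replacing $(u,p^{T},a_{n-1,n-1})$ by $(v,q^{T},a_{n,n})$, then $(v,p^{T},a_{n-1,n})$, then $(u,q^{T},a_{n,n-1})$.

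Next I would expand each bordered determinant. The standard bordered-determinant identity (cofactor expansion along the last row and column; or Schur complement together with a polynomial-identity argument to cover the singular case) gives
\begin{equation*}
\det\begin{pmatrix} M & w \\ r^{T} & \alpha\end{pmatrix}
=\alpha\det M - r^{T}\,\operatorname{adj}(M)\,w ,
\end{equation*}
where $\operatorname{adj}(M)$ is the adjugate of $M$. Because $\det M=0$ by hypothesis, the first term drops out of all four expansions, so, writing $N=\operatorname{adj}(M)$,
\begin{equation*}
\det \left[ \left( a_{i,j}\right) _{\substack{ i\neq n \\ j\neq n}}\right]=-p^{T}Nu,\qquad
\det \left[ \left( a_{i,j}\right) _{\substack{ i\neq n-1 \\ j\neq n-1}}\right]=-q^{T}Nv,
\end{equation*}
\begin{equation*}
\det \left[ \left( a_{i,j}\right) _{\substack{ i\neq n \\ j\neq n-1}}\right]=-p^{T}Nv,\qquad
\det \left[ \left( a_{i,j}\right) _{\substack{ i\neq n-1 \\ j\neq n}}\right]=-q^{T}Nu .
\end{equation*}
Substituting into the left-hand side of \eqref{(2.7)} turns it into $(p^{T}Nu)(q^{T}Nv)-(p^{T}Nv)(q^{T}Nu)$.

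The crux, which I expect to be the main obstacle, is to see why this last difference vanishes; this is precisely where the singularity of $M$ is used a second time. The key fact is that the adjugate of a singular matrix has rank at most one: from $M\,\operatorname{adj}(M)=\det(M)I=0$ every column of $N$ lies in $\ker M$, so if $\operatorname{rank}M=n-3$ (nullity one) all columns of $N$ are proportional and $\operatorname{rank}N\leq 1$, while if $\operatorname{rank}M\leq n-4$ every $(n-3)\times(n-3)$ minor of $M$ vanishes and $N=0$. In either case one may write $N=xy^{T}$ for suitable column vectors $x,y$ (with $x=0$ or $y=0$ in the degenerate case). Then $p^{T}Nu=(p^{T}x)(y^{T}u)$ and likewise for the other three forms, whence
\begin{equation*}
(p^{T}Nu)(q^{T}Nv)-(p^{T}Nv)(q^{T}Nu)=(p^{T}x)(y^{T}u)(q^{T}x)(y^{T}v)-(p^{T}x)(y^{T}v)(q^{T}x)(y^{T}u)=0,
\end{equation*}
since both products equal the same scalar $(p^{T}x)(q^{T}x)(y^{T}u)(y^{T}v)$. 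This establishes \eqref{(2.7)}. I would add that the computation is the Desnanot--Jacobi identity specialised to the corner $2\times2$ block with a vanishing central minor, but carried out directly so as not to presuppose the general identity the paper is aiming to prove.
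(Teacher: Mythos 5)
Your proof is correct, and it reaches the key cancellation by a genuinely different mechanism than the paper. The paper works with linear dependence: since the rows of $M=\left(a_{i,j}\right)_{\substack{i\neq n-1,n\\ j\neq n-1,n}}$ are dependent, a suitable row $k$ can be annihilated in the first $n-2$ columns by row operations inside each of the four $(n-1)\times(n-1)$ minors; expanding along that row then factors each minor as $(-1)^{n-k+1}\bigl(\sum_{i=1}^{n-2}\lambda_i a_{ir}\bigr)$ times a smaller determinant from which \emph{both} columns $n-1$ and $n$ are deleted, i.e., as a column-dependent scalar times a row-dependent determinant, and the $2\times2$ combination of such products vanishes identically --- the same rank-one structure you exhibit. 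Your route to that structure is the adjugate: the identity $\det\left(\begin{smallmatrix} M & w\\ r^{T} & \alpha\end{smallmatrix}\right)=\alpha\det M - r^{T}\operatorname{adj}(M)\,w$ plus $\operatorname{rank}\operatorname{adj}(M)\le 1$ for singular $M$. What your version buys: no sign bookkeeping, no arbitrary choice of the dependent row $k$ and coefficients $\lambda_i$, and the degenerate case $\operatorname{adj}(M)=0$ (rank of $M$ below $n-3$) is handled explicitly and uniformly, where the paper glosses over it. What the paper's version buys: it is entirely elementary (row operations and Laplace expansion only) and uses the hypothesis just once, via dependence of rows, whereas you invoke it twice (once to drop the $\alpha\det M$ terms, once for the rank of the adjugate). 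Both proofs tacitly assume the entries lie in a field --- yours for the kernel-dimension argument, the paper's for row dependence --- which is harmless here and could in either case be removed by the polynomial-identity reduction you mention for the bordered-determinant formula.
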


\begin{proof}
To prove the formula 
\msihyperref{(2.7)}{(}{)}{(2.7)}%
, we let $\det \left[ \left( a_{i,j}\right) _{\substack{ i\neq n-1,n  \\ %
j\neq n-1,n}}\right] =0.$ But $\det \left[ \left( a_{i,j}\right) _{\substack{
i\neq n-1,n  \\ j\neq n-1,n}}\right] =0$ it means the rows of the matrix $%
\left( a_{i,j}\right) _{\substack{ i\neq n-1,n  \\ j\neq n-1,n}}$ \ are
dependent linearly.Consequently, $\exists \left( \lambda _{i}\right)
_{i=1}^{n-2},$ $\exists k=1,...,n-2$\newline
\begin{equation}
\det \left[ \left( a_{i,j}\right) _{\substack{ i\neq 2n-1-s  \\ j\neq 2n-1-r 
}}\right] =\left\vert 
\begin{array}{cccccccc}
a_{11} & a_{12} & a_{13} & .. & a_{1k} & ... & a_{1n-2} & a_{1r} \\ 
a_{21} & a_{22} & a_{23} & ... & a_{2k} & ... & a_{2n-2} & a_{2r} \\ 
a_{31} & a_{32} & a_{33} & ... & a_{3k} & ... & a_{3n-2} & a_{3r} \\ 
\vdots & \vdots & \vdots & \ddots & \vdots & \ddots & \vdots & \vdots \\ 
0 & 0 & 0 & ... & 0 & ... & 0 & \underset{i=1}{\overset{i=n-2}{\tsum }}%
\lambda _{i}a_{ir} \\ 
\vdots & \vdots & \vdots & \ddots & \vdots & \ddots & \vdots & \vdots \\ 
a_{n-2,1} & a_{n-2,2} & a_{n-2,3} & ... & a_{n-2,k} & ... & a_{n-2,n-2} & 
a_{n-2,r} \\ 
a_{s,1} & a_{s,2} & a_{s,3} & ... & a_{s,k} & ... & a_{s,,n-2} & a_{s,r}%
\end{array}%
\right\vert  \label{(2.8)}
\end{equation}%
for $r,s=\left( n-1\right) $ or $\ n..$ And also by calculating the
determinant according to the last column, the formula will be the simple
shape: 
\begin{equation}
\left\{ 
\begin{array}{c}
\det \left[ \left( a_{i,j}\right) _{\substack{ i\neq 2n-1-s  \\ j\neq 2n-1-r 
}}\right] = \\ 
\left( -1\right) ^{n-k+1}\left( \underset{i=1}{\overset{i=n-2}{\tsum }}%
\lambda _{i}a_{ir}\right) \det \left[ \left( a_{i,j}\right) _{\substack{ %
i\neq 2n-1-s,k  \\ j\neq 2n-1-r,r}}\right]%
\end{array}%
\right. \text{ where }r,s=n-1\text{ or }n  \label{(2.9)}
\end{equation}%
By using the formula 
\msihyperref{(2.9)}{(}{)}{(2.9)}
,we get :\newline
\begin{equation*}
\det \left[ \left( a_{i,j}\right) _{\substack{ i\neq n  \\ j\neq n}}\right]
=\left( -1\right) ^{n-k+1}\left( \underset{i=1}{\overset{i=n-2}{\tsum }}%
\lambda _{i}a_{in-1}\right) \det \left[ \left( a_{i,j}\right) _{\substack{ %
i\neq n,k  \\ j\neq n,n-1}}\right] ;
\end{equation*}%
\begin{equation*}
\det \left[ \left( a_{i,j}\right) _{\substack{ i\neq n-1  \\ j\neq n-1}}%
\right] =\left( -1\right) ^{n-k+1}\left( \underset{i=1}{\overset{i=n-2}{%
\tsum }}\lambda _{i}a_{in}\right) \det \left[ \left( a_{i,j}\right) 
_{\substack{ i\neq n-1,k  \\ j\neq n-1,n}}\right] ;
\end{equation*}%
\begin{equation*}
\det \left[ \left( a_{i,j}\right) _{\substack{ i\neq n  \\ j\neq n-1}}\right]
=\left( -1\right) ^{n-k+1}\left( \underset{i=1}{\overset{i=n-2}{\tsum }}%
\lambda _{i}a_{in}\right) \det \left[ \left( a_{i,j}\right) _{\substack{ %
i\neq n,k  \\ j\neq n-1,n}}\right] ;
\end{equation*}%
\begin{equation*}
\det \left[ \left( a_{i,j}\right) _{\substack{ i\neq n-1  \\ j\neq n}}\right]
=\left( -1\right) ^{n-k+1}\left( \underset{i=1}{\overset{i=n-2}{\tsum }}%
\lambda _{i}a_{in-1}\right) \det \left[ \left( a_{i,j}\right) _{\substack{ %
i\neq n-1,k  \\ j\neq n,n-1}}\right] .
\end{equation*}%
Finally, the first member%
\begin{equation*}
\det \left[ \left( a_{i,j}\right) _{\substack{ i\neq n  \\ j\neq n}}\right]
\det \left[ \left( a_{i,j}\right) _{\substack{ i\neq n-1  \\ j\neq n-1}}%
\right] -\det \left[ \left( a_{i,j}\right) _{\substack{ i\neq n  \\ j\neq
n-1 }}\right] \det \left[ \left( a_{i,j}\right) _{\substack{ i\neq n-1  \\ %
j\neq n }}\right]
\end{equation*}%
of the formula 
\msihyperref{(2.7)}{(}{)}{(2.7)}
will be: $\left( \underset{i=1}{\overset{i=n-2}{\tsum }}\lambda
_{i}a_{in-1}\right) \left( \underset{i=1}{\overset{i=n-2}{\tsum }}\lambda
_{i}a_{in}\right) \cdot $\newline
$\left[ \det \left[ \left( a_{i,j}\right) _{\substack{ i\neq n,k  \\ j\neq
n,n-1}}\right] \det \left[ \left( a_{i,j}\right) _{\substack{ i\neq n-1,k 
\\ j\neq n-1,n}}\right] -\det \left[ \left( a_{i,j}\right) _{\substack{ %
i\neq n,k  \\ j\neq n,n-1}}\right] \det \left[ \left( a_{i,j}\right) 
_{\substack{ i\neq n-1,k  \\ j\neq n-1,n}}\right] \right] =0$. And like
this, we have finished the proof of the lemma 3.
\end{proof}

\begin{theorem}
For all square matrix $A=\left( a_{i,j}\right) _{1\leq i,j\leq n}$ where $%
n>2 $\newline
the formula 
\msihyperref{(1.1)}{(}{)}{(1.1)}
is satisfied
\end{theorem}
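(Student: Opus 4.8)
The plan is to reduce the general statement (1.1) to the single corner case $k=n-1,\ l=n$, and then to settle that case by splitting according to whether the central minor vanishes. Given arbitrary $k<l$, I would choose a permutation $\pi$ of $\{1,\dots,n\}$ with $\pi(k)=n-1$ and $\pi(l)=n$, and pass from $A$ to the matrix $A'$ obtained by reordering \emph{both} the rows and the columns of $A$ by $\pi$. Since the same permutation acts on rows and on columns, $\det A'=\det A$, and each of the two diagonal minors and the central minor in (1.1) is carried to the corresponding minor of $A'$ with no change of sign. The two mixed minors $\det[(a_{i,j})_{i\neq k,\,j\neq l}]$ and $\det[(a_{i,j})_{i\neq l,\,j\neq k}]$ may individually acquire signs, but these enter only through their product, which a short check shows is unchanged. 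Hence (1.1) for $A$ at $(k,l)$ is equivalent to (1.1) for $A'$ at $(n-1,n)$; this is exactly why Lemmas~1--3 are phrased in terms of the last two rows and columns.

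For the corner case I would split on the central minor $E:=\det[(a_{i,j})_{i\neq n-1,n,\ j\neq n-1,n}]$. If $E=0$, then the left-hand side of (1.1) is $\det A\cdot E=0$, while Lemma~3 asserts precisely that the right-hand side (2.7) also vanishes; so the identity holds, disposing of the degenerate case with no further work.

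If $E\neq 0$, I would use a Schur-complement computation. Write $A$ in block form $\begin{pmatrix} M & U\\ V & W\end{pmatrix}$, where $M$ is the $(n-2)\times(n-2)$ leading block (so $\det M=E$), $W$ is the $2\times 2$ trailing block, and $U,V$ are the off-diagonal blocks. Since $M$ is invertible, $\det A=\det M\,\det S$ with $S:=W-VM^{-1}U$ a $2\times 2$ matrix, and the same factorization applied to each corner minor identifies them, up to the common factor $\det M$, with the entries of $S$: deleting row/column $n$ gives $\det M\cdot S_{11}$, deleting row/column $n-1$ gives $\det M\cdot S_{22}$, and the two mixed minors give $\det M\cdot S_{12}$ and $\det M\cdot S_{21}$ (here there are no stray signs, because in each case the retained special entry sits in the bottom-right corner). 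Substituting, the right-hand side of (1.1) becomes $(\det M)^2(S_{11}S_{22}-S_{12}S_{21})=(\det M)^2\det S=\det M\cdot\det A=E\cdot\det A$, which is exactly the left-hand side. Equivalently, since both sides of (1.1) are polynomials in the entries, verifying them on the dense set $E\neq 0$ forces the identity everywhere, so Lemma~3 merely furnishes an elementary treatment of the boundary locus $E=0$; the cofactor expansions recorded in Lemmas~1 and~2 are what one would use to carry out the corner computation by purely determinantal manipulations and induction in place of inverting $M$.

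The main obstacle I anticipate is the sign bookkeeping in the reduction step, namely verifying that the product of the two mixed minors is genuinely invariant under the simultaneous row/column relabeling. I would handle this either by performing the relabeling through a sequence of adjacent transpositions, checking at each step that all six determinants in (1.1) transform consistently, or by appealing to the cofactor structure, in which the sign $(-1)^{k+l}$ attached to each mixed cofactor cancels in the product.
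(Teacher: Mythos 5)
Your proposal is correct, and while it shares the paper's skeleton---reduce to the corner case $k=n-1$, $l=n$ by a simultaneous row/column permutation, then split on whether the central minor $E=\det\left[ \left( a_{i,j}\right) _{\substack{ i\neq n-1,n \\ j\neq n-1,n}}\right]$ vanishes---it resolves the main case $E\neq 0$ by a genuinely different method. The paper proceeds by induction on $n$: it applies the identity at size $n-1$ to the four corner minors (formulas (2.15)--(2.18)), reassembles the resulting expression via the cofactor-expansion and vanishing-determinant lemmas (Lemmas 1 and 2, through formulas (2.19)--(2.23)), and finally cancels a factor of $\det\left[ \left( a_{i,j}\right) _{\substack{ i\neq n,n-1,n-2 \\ j\neq n,n-1,n-2}}\right]$ to reach (2.12). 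Your Schur-complement computation replaces this entire inductive apparatus in one stroke: with $A=\begin{pmatrix} M & U\\ V & W\end{pmatrix}$ and $S=W-VM^{-1}U$, the factorizations $\det A=\det M\det S$ and (corner minors) $=\det M\cdot S_{ij}$ make the right-hand side collapse to $(\det M)^2\det S=E\det A$; your check that the special entry always sits in the bottom-right corner, so no signs intrude, is the key point and is accurate. What each approach buys: the paper's induction uses only cofactor expansions and no matrix inversion, so it is formally more elementary (though note it implicitly divides by the smaller central minor, which the hypothesis $E\neq 0$ does not guarantee is nonzero---a gap your method does not have); your argument is shorter and conceptually transparent, and your closing density/polynomial-identity remark even makes Lemma 3 and the degenerate case $E=0$ redundant, since an identity of polynomials in the entries that holds on the dense locus $E\neq 0$ holds identically. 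Your sign analysis in the reduction step (the two mixed minors each acquire the same sign $\varepsilon$, so their product acquires $\varepsilon^{2}=1$) is also sound and is in fact more careful than the paper's brief "pair replacing" justification.
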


\begin{proof}
The formula 
\msihyperref{(1.1)}{(}{)}{(1.1)}
can be written as follows:\newline
\begin{equation}
\det \left[ \left( a_{i,j}\right) _{1\leq i,j\leq n}\right] \det \left[
\left( a_{i,j}\right) _{\substack{ i\neq k,l  \\ j\neq k,l}}\right] =\det %
\left[ 
\begin{array}{cc}
\det \left[ \left( a_{i,j}\right) _{\substack{ i\neq l  \\ j\neq l}}\right]
& \det \left[ \left( a_{i,j}\right) _{\substack{ i\neq l  \\ j\neq k}}\right]
\\ 
\det \left[ \left( a_{i,j}\right) _{\substack{ i\neq k  \\ j\neq l}}\right]
& \det \left[ \left( a_{i,j}\right) _{\substack{ i\neq k  \\ j\neq k}}\right]%
\end{array}%
\right] .  \label{(2.10)}
\end{equation}

To prove the formula 
\msihyperref{(1.1)}{(}{)}{(1.1)}%
, it is sufficient to prove the following formula:\newline
for $n>2$ we have:%
\begin{equation}
\left. 
\begin{array}{c}
\det \left[ \left( a_{i,j}\right) _{1\leq i,j\leq n}\right] \det \left[
\left( a_{i,j}\right) _{\substack{ i\neq n-1,n  \\ j\neq n-1,n}}\right] = \\ 
\det \left[ \left( a_{i,j}\right) _{\substack{ i\neq n  \\ j\neq n}}\right]
\det \left[ \left( a_{i,j}\right) _{\substack{ i\neq n-1  \\ j\neq n-1}}%
\right] -\det \left[ \left( a_{i,j}\right) _{\substack{ i\neq n  \\ j\neq
n-1 }}\right] \det \left[ \left( a_{i,j}\right) _{\substack{ i\neq n-1  \\ %
j\neq n }}\right]%
\end{array}%
\right.  \label{(2.11)}
\end{equation}%
Because by pair replacing of row $k$ with row $(n-1)$ and row $l$ with row $%
n $, column $k$ with column $(n-1)$ and column $l$ with column $n$ \ that
does not change the determinant of the matrix $\left( a_{i,j}\right) _{1\leq
i,j\leq n}$. And by applying the formula 
\msihyperref{(2.11)}{(}{)}{(2.11)}
,then we replace some rows and some columns, noting that the number of the
replaced rows is the same number of the replaced columns in order to stable
the determinants, consequently, we get 
\msihyperref{(1.1)}{(}{)}{(1.1)}%
.

To prove the formula 
\msihyperref{(2.11)}{(}{)}{(2.11)}
there are two cases:

The first case: when $\det \left[ \left( a_{i,j}\right) _{\substack{ i\neq
n-1,n  \\ j\neq n-1,n}}\right] =0$, the proof of the formula 
\msihyperref{(2.11)}{(}{)}{(2.11)}
is the same proof of lemma3.

The second case: when $\det \left[ \left( a_{i,j}\right) _{\substack{ i\neq
n-1,n  \\ j\neq n-1,n}}\right] \neq 0$ we prove the formula 
\msihyperref{(2.11)}{(}{)}{(2.11)}
inductively:

For $n=3$, we find that the proof of the formula 
\msihyperref{(2.11)}{(}{)}{(2.11)}
is evident. For $n=4$, we find that the proof of the formula 
\msihyperref{(2.11)}{(}{)}{(2.11)}
is evident with simple calculations.

When $n>4$, we suppose the formula 
\msihyperref{(2.11)}{(}{)}{(2.11)}
is correct for $\left( n-1\right) $ and we prove it for $n.$ In an other
word, we prove that :%
\begin{equation}
P-Q=\det \left[ \left( a_{i,j}\right) _{1\leq i,j\leq n}\right] .\det \left[
\left( a_{i,j}\right) _{\substack{ i\neq n-1,n  \\ j\neq n-1,n}}\right]
\label{(2.12)}
\end{equation}%
where%
\begin{equation}
P=\det \left[ \left( a_{i,j}\right) _{\substack{ i\neq n  \\ j\neq n}}\right]
\det \left[ \left( a_{i,j}\right) _{\substack{ i\neq n-1  \\ j\neq n-1}}%
\right]  \label{(2.13)}
\end{equation}%
and 
\begin{equation}
Q=\det \left[ \left( a_{i,j}\right) _{\substack{ i\neq n  \\ j\neq n-1}}%
\right] \det \left[ \left( a_{i,j}\right) _{\substack{ i\neq n-1  \\ j\neq n 
}}\right] .  \label{(2.14)}
\end{equation}

We apply the formula 
\msihyperref{(2.11)}{(}{)}{(2.11)}
for $\left( n-1\right) $ on the formula 
\msihyperref{(2.13)}{(}{)}{(2.13)}%
\begin{equation}
\left. 
\begin{array}{c}
\det \left[ \left( a_{i,j}\right) _{\substack{ i\neq n  \\ j\neq n}}\right]
\det \left[ \left( a_{i,j}\right) _{\substack{ i\neq n,n-1,n-2  \\ j\neq
n,n-1,n-2}}\right] = \\ 
\det \left[ \left( a_{i,j}\right) _{\substack{ i\neq n,n-1  \\ j\neq n,n-1}}%
\right] \det \left[ \left( a_{i,j}\right) _{\substack{ i\neq n,n-2  \\ j\neq
n,n-2}}\right] -\det \left[ \left( a_{i,j}\right) _{\substack{ i\neq n,n-1 
\\ j\neq n,n-2}}\right] \det \left[ \left( a_{i,j}\right) _{\substack{ i\neq
n,n-2  \\ j\neq n,n-1}}\right] .%
\end{array}%
\right.  \label{(2.15)}
\end{equation}%
\begin{equation}
\left. 
\begin{array}{c}
\det \left[ \left( a_{i,j}\right) _{\substack{ i\neq n-1  \\ j\neq n-1}}%
\right] \det \left[ \left( a_{i,j}\right) _{\substack{ i\neq n,n-1,n-2  \\ %
j\neq n,n-1,n-2}}\right] = \\ 
\det \left[ \left( a_{i,j}\right) _{\substack{ i\neq n,n-1  \\ j\neq n,n-1}}%
\right] \det \left[ \left( a_{i,j}\right) _{\substack{ i\neq n-1,n-2  \\ %
j\neq n-1,n-2}}\right] -\det \left[ \left( a_{i,j}\right) _{\substack{ i\neq
n,n-1  \\ j\neq n-1,n-2}}\right] \det \left[ \left( a_{i,j}\right) 
_{\substack{ i\neq n-1,n-2  \\ j\neq n,n-1}}\right] .%
\end{array}%
\right.  \label{(2.16)}
\end{equation}%
We apply the formula 
\msihyperref{(2.11)}{(}{)}{(2.11)}
for $\left( n-1\right) $ on the formula 
\msihyperref{(2.14)}{(}{)}{(2.14)}%
: 
\begin{equation}
\left. 
\begin{array}{c}
\det \left[ \left( a_{i,j}\right) _{\substack{ i\neq n  \\ j\neq n-1}}\right]
\det \left[ \left( a_{i,j}\right) _{\substack{ i\neq n,n-1,n-2  \\ j\neq
n,n-1,n-2}}\right] = \\ 
\det \left[ \left( a_{i,j}\right) _{\substack{ i\neq n,n-1  \\ j\neq n,n-1}}%
\right] \det \left[ \left( a_{i,j}\right) _{\substack{ i\neq n,n-2  \\ j\neq
n-1,n-2}}\right] -\det \left[ \left( a_{i,j}\right) _{\substack{ i\neq n,n-1 
\\ j\neq n-1,n-2}}\right] \det \left[ \left( a_{i,j}\right) _{\substack{ %
i\neq n,n-2  \\ j\neq n,n-1}}\right] .%
\end{array}%
\right.  \label{(2.17)}
\end{equation}%
\begin{equation}
\left. 
\begin{array}{c}
\det \left[ \left( a_{i,j}\right) _{\substack{ i\neq n-1  \\ j\neq n}}\right]
\det \left[ \left( a_{i,j}\right) _{\substack{ i\neq n,n-1,n-2  \\ j\neq
n,n-1,n-2}}\right] = \\ 
\det \left[ \left( a_{i,j}\right) _{\substack{ i\neq n,n-1  \\ j\neq n,n-1}}%
\right] \det \left[ \left( a_{i,j}\right) _{\substack{ i\neq n-1,n-2  \\ %
j\neq n,n-2}}\right] -\det \left[ \left( a_{i,j}\right) _{\substack{ i\neq
n,n-1  \\ j\neq n,n-2}}\right] \det \left[ \left( a_{i,j}\right) _{\substack{
i\neq n-1,n-2  \\ j\neq n,n-1}}\right] .%
\end{array}%
\right.  \label{(2.18)}
\end{equation}%
Using the formulas 
\msihyperref{(2.15)}{(}{)}{(2.15)}%
-%
\msihyperref{(2.18)}{(}{)}{(2.18)}
on $\left( P-Q\right) $, we find : 
\begin{equation}
\left. 
\begin{array}{c}
\left( P-Q\right) \left( \det \left[ \left( a_{i,j}\right) _{\substack{ %
i\neq n,n-1,n-2  \\ j\neq n,n-1,n-2}}\right] \right) ^{2}= \\ 
\det \left[ \left( a_{i,j}\right) _{\substack{ i\neq n,n-1  \\ j\neq n,n-1}}%
\right] \cdot \left\{ 
\begin{array}{c}
\Gamma \det \left[ \left( a_{i,j}\right) _{\substack{ i\neq n-1,n-2  \\ %
j\neq n-1,n-2}}\right] -\Lambda \det \left[ \left( a_{i,j}\right) 
_{\substack{ i\neq n-1,n-2  \\ j\neq n,n-2}}\right] \\ 
+\digamma \det \left[ \left( a_{i,j}\right) _{\substack{ i\neq n-1,n-2  \\ %
j\neq n,n-1}}\right]%
\end{array}%
\right\}%
\end{array}%
\right.  \label{(2.19)}
\end{equation}%
where $\Gamma =\det \left[ \left( a_{i,j}\right) _{\substack{ i\neq n,n-2 
\\ j\neq n,n-2}}\right] \det \left[ \left( a_{i,j}\right) _{\substack{ i\neq
n,n-1  \\ j\neq n,n-1}}\right] -\det \left[ \left( a_{i,j}\right) 
_{\substack{ i\neq n,n-1  \\ j\neq n,n-2}}\right] \det \left[ \left(
a_{i,j}\right) _{\substack{ i\neq n,n-2  \\ j\neq n,n-1}}\right] $ $\Lambda
=\det \left[ \left( a_{i,j}\right) _{\substack{ i\neq n,n-2  \\ j\neq
n-1,n-2 }}\right] \det \left[ \left( a_{i,j}\right) _{\substack{ i\neq n,n-1 
\\ j\neq n,n-1}}\right] -\det \left[ \left( a_{i,j}\right) _{\substack{ %
i\neq n,n-1  \\ j\neq n-1,n-2}}\right] \det \left[ \left( a_{i,j}\right) 
_{\substack{ i\neq n,n-2  \\ j\neq n,n-1}}\right] $ $\digamma =\det \left[
\left( a_{i,j}\right) _{\substack{ i\neq n,n-2  \\ j\neq n-1,n-2}}\right]
\det \left[ \left( a_{i,j}\right) _{\substack{ i\neq n,n-1  \\ j\neq n,n-2}}%
\right] -\det \left[ \left( a_{i,j}\right) _{\substack{ i\neq n,n-2  \\ %
j\neq n,n-2}}\right] \det \left[ \left( a_{i,j}\right) _{\substack{ i\neq
n,n-1  \\ j\neq n-1,n-2}}\right] $ We apply the formula 
\msihyperref{(2.11)}{(}{)}{(2.11)}
for $\left( n-1\right) $ on $\Gamma ,\Lambda $ and $\digamma ,$ we find the
following : 
\begin{equation}
\Gamma =\det \left[ \left( a_{i,j}\right) _{\substack{ i\neq n  \\ j\neq n}}%
\right] \det \left[ \left( a_{i,j}\right) _{\substack{ i\neq n,n-1,n-2  \\ %
j\neq n,n-1,n-2}}\right]  \label{(2.20)}
\end{equation}%
\begin{equation}
\Lambda =\det \left[ \left( a_{i,j}\right) _{\substack{ i\neq n  \\ j\neq
n-1 }}\right] \det \left[ \left( a_{i,j}\right) _{\substack{ i\neq n,n-1,n-2 
\\ j\neq n,n-1,n-2}}\right]  \label{(2.21)}
\end{equation}%
\begin{equation}
\digamma =\det \left[ \left( a_{i,j}\right) _{\substack{ i\neq n  \\ j\neq
n-2 }}\right] \det \left[ \left( a_{i,j}\right) _{\substack{ i\neq n,n-1,n-2 
\\ j\neq n,n-1,n-2}}\right]  \label{(2.22)}
\end{equation}%
In the formula 
\msihyperref{(2.19), }{(}{)}{(2.19)}%
we replace $\Gamma ,\Lambda $ and $\digamma $ by their equivalents, we find:%
\begin{equation}
\left. 
\begin{array}{c}
\left( P-Q\right) \det \left[ \left( a_{i,j}\right) _{\substack{ i\neq
n,n-1,n-2  \\ j\neq n,n-1,n-2}}\right] = \\ 
\det \left[ \left( a_{i,j}\right) _{\substack{ i\neq n,n-1  \\ i\neq n,n-1}}%
\right] \cdot \left\{ 
\begin{array}{c}
\det \left[ \left( a_{i,j}\right) _{\substack{ i\neq n-1,n-2  \\ j\neq
n-1,n-2 }}\right] \det \left[ \left( a_{i,j}\right) _{\substack{ i\neq n  \\ %
j\neq n}}\right] \\ 
-\det \left[ \left( a_{i,j}\right) _{\substack{ i\neq n-1,n-2  \\ j\neq
n,n-2 }}\right] \det \left[ \left( a_{i,j}\right) _{\substack{ i\neq n  \\ %
j\neq n-1 }}\right] \\ 
+\det \left[ \left( a_{i,j}\right) _{\substack{ i\neq n-1,n-2  \\ j\neq
n,n-1 }}\right] \det \left[ \left( a_{i,j}\right) _{\substack{ i\neq n  \\ %
j\neq n-2 }}\right]%
\end{array}%
\right\} \text{.}%
\end{array}%
\right.  \label{(2.23)}
\end{equation}%
By using the formula 
\msihyperref{(2.1)}{(}{)}{(2.1)}
in lemma 1, we can write the formula 
\msihyperref{(2.23)}{(}{)}{(2.23)}
as follows:%
\begin{equation*}
\left( P-Q\right) \det \left[ \left( a_{i,j}\right) _{\substack{ i\neq
n,n-1,n-2  \\ j\neq n,n-1,n-2}}\right] =\det \left[ \left( a_{i,j}\right) 
_{\substack{ i\neq n,n-1  \\ j\neq n,n-1}}\right] \overset{2}{\underset{k=0}{%
\tsum }}\left( -1\right) ^{k}\det \left[ A(k)\right] \det \left[ \left(
a_{i,j}\right) _{\substack{ i\neq n  \\ i\neq n-k}}\right] .
\end{equation*}%
By using the formula 
\msihyperref{(2.2)}{(}{)}{(2.2)}
in lemma 1, we can write the above formula as follows:%
\begin{equation*}
\left( P-Q\right) \det \left[ \left( a_{i,j}\right) _{\substack{ i\neq
n,n-1,n-2  \\ j\neq n,n-1,n-2}}\right] =\det \left[ \left( a_{i,j}\right) 
_{\substack{ i\neq n,n-1  \\ j\neq n,n-1}}\right] \overset{n-1}{\underset{k=0%
}{\tsum }}\left( -1\right) ^{k}\det \left[ A(k)\right] \det \left[ \left(
a_{i,j}\right) _{\substack{ i\neq n  \\ i\neq n-k}}\right] .
\end{equation*}%
By using the formula 
\msihyperref{(2.3)}{(}{)}{(2.3)}
in lemma 1 and the formula 
\msihyperref{(2.6)}{(}{)}{(2.6)}
in lemma 2, we can write the above formula as follows 
\begin{eqnarray*}
\left( P-Q\right) \det \left[ \left( a_{i,j}\right) _{\substack{ i\neq
n,n-1,n-2  \\ j\neq n,n-1,n-2}}\right] &=&\det \left[ \left( a_{i,j}\right) 
_{\substack{ i\neq n,n-1,n-2  \\ j\neq n,n-1,n-2}}\right] \det \left[ \left(
a_{i,j}\right) _{\substack{ i\neq n,n-1  \\ j\neq n,n-1}}\right] \det \left[
\left( a_{i,j}\right) _{1\leq i,j\leq n}\right] \\
&&+\overset{l=n-1}{\underset{l=3}{\tsum }}\left( -1\right) ^{l}\left\{ \det
B(l)\right\} \det \left[ \left( a_{i,j}\right) _{\substack{ i\neq
n-1,n-2,n-l  \\ j\neq n,n-1,n-2}}\right] .
\end{eqnarray*}%
\newline
And finally, by using the formula 
\msihyperref{(2.5)}{(}{)}{(2.5)}
in lemma 2, we find that the above formula will be formula 
\msihyperref{(2.12)}{(}{)}{(2.12)}%
.
\end{proof}


\begin{thebibliography}{9}
\bibitem{Dodgson} C.L. Dodgson, Condensation of Determinants, Proceedings of
the Royal Society of London 15(1866), 150-155.

\bibitem{Zeilberger} Doron Zeilberger, Dodgson's Determinant-Evaluation Rule
Proved by TWO-TIMING MEN and WOMEN, the electronic journal of combinatorics
4 (2) (1997), \#R22.
\end{thebibliography}
\end{document}